\documentclass[a4paper,12pt]{article}

\usepackage{amsmath}
\usepackage{amssymb}
\usepackage{amsthm}
\usepackage[dvipdfmx]{graphicx}
\usepackage{amscd}
\usepackage{comment}


\newcommand{\supp}{\mathop{\mathrm{supp}}\nolimits}

\newcommand{\Min}{\mathrm{Min}}
\newcommand{\Inf}{\mathrm{inf}}

\title{On groups whose actions on finite-dimensional CAT(0) spaces have global fixed points}
\author{Motoko Kato}

\begin{document}

\numberwithin{equation}{section}
\newtheorem{theorem}{Theorem}[section]
\newtheorem{proposition}[theorem]{Proposition}
\newtheorem{formula}{Formula}[section]
\newtheorem{lemma}[theorem]{Lemma}
\newtheorem{corollary}[theorem]{Corollary}
\newtheorem{remark}{Remark}[section]
\newtheorem{definition}{Definition}[section]
\newtheorem{example}[theorem]{Example}
\newtheorem{question}[theorem]{Question}

\maketitle

\begin{abstract}
We give a criterion for group elements to have fixed points with respect to a semi-simple action on a complete CAT(0) space of finite topological dimension.  
As an application, we show that Thompson's group $T$ and various generalizations of Thompson's group $V$ have global fixed points when they act semi-simply on finite-dimensional complete CAT(0) spaces,
while it is known that
$T$ and $V$ act properly on infinite-dimensional CAT(0) cube complexes. 
\end{abstract}

\section{Introduction}

A CAT(0) space is a geodesic metric space whose geodesic triangles are no fatter than triangles in the Euclidean plane with the same edge lengths.
The existence of reasonable actions on CAT(0) spaces is important information for a group, related to various properties of groups. 

A group $G$ is said to have {\it Serre's property $FA$} if every action of $G$ on a simplicial tree, which is a $1$-dimensional CAT(0) cube complex, has a global fixed point.
If $G$ has Serre's property $FA$ then $G$ does not split as an amalgamated product or an HNN extension.

For every $k\in \mathbb{N}$, a group $G$ is said to have {\it property $F\mathcal{A}_{k}$} if every isometric action of $G$ on a complete CAT(0) space of
topological dimension $k$ has a global fixed point (\cite{Farb}, \cite{Varghese}).
We say that $G$ has {\it property $F\mathcal{A}_{k}$ for semi-simple actions} if every semi-simple action of $G$ on a complete CAT(0) space of
topological dimension $k$ has a global fixed point.
Property $\mathrm{F}\mathcal{A}_{k}$ is related to representations of groups.
For example, if a group $G$ has $\mathrm{F}\mathcal{A}_{k-1}$ then $G$ is of integral $k$-representation type (\cite{Farb}).

For $n\geq 3$, $\mathrm{SL}_n(\mathbb{Z}[1/p])$ has property $\mathrm{F}\mathcal{A}_{n-2}$ for semi-simple actions, while it does not have property $\mathrm{F}\mathcal{A}_{n-1}$ (\cite{Farb}).
For $n\geq 4$ and $k<\mathrm{min}\{i \lfloor {n}/{(i+2)}\rfloor \mid 2\leq i\leq k+1\}$, $\mathrm{Aut}(F_n)$ has property $\mathrm{F}\mathcal{A}_{k}$ (\cite{Varghese}).
For every $g\in \mathbb{N}$, the mapping class group of a closed orientable surface of genus $g$ has property $\mathrm{F}\mathcal{A}_{g-1}$ for semi-simple actions (\cite{Bridson}).

We argue property $\mathrm{F}\mathcal{A}_{k}$ for semi-simple actions
for Thompson's groups $T$, $V$ and their generalizations.
$T$ and $V$ are known as the first examples of finitely presented infinite simple groups.

It is known that Thompson's groups $T$, $V$ and $F$
act properly on infinite-dimensional CAT(0) cube complexes (\cite{Farley3} for $F$ and \cite{Farley}).
Since $F$ admits a structure of an $\mathrm{HNN}$-extention, $F$ has an isometric action without a global fixed point on a tree.
On the othe hand, $F$, $T$ and $V$ cannot act properly on finite dimensional CAT(0) complexes, since they contain free abelian subgroups of infinite rank (\cite{CFP}).
By considering these facts, it is natural to ask whether $T$ and $V$ can act isometrically on finite-dimensional CAT(0) complexes, without global fixed points.

In this article, 
we give a criterion for group elements to have fixed points with respect to a semi-simle action on a complete CAT(0) space of finite topological dimension.  
In the following, for a group $G$ acting on a set $A$,
{\it the support of an element} $g\in G$ is $\supp(g)=\{a\in A\mid ga\neq a\} \subset A$.  
{\it The support of a subset} $L$ is $\supp(L)=\bigcup_{l\in L} \supp(l)\subset A$. 
For a topological space $X$,
$\dim(X)$ denotes the topological dimension of $X$.
We say $X$ is $k$-dimensional if $\dim(X)=k$.

\begin{theorem}\label{MainNew}
Let $G$ be a group acting faithfully on a set $A$. 
Let $s\in G$ be an element satisfying the following conditions:
there is a sequence of subgroups $\langle s\rangle = H_0<H_1<\cdots<H_k<H_{k+1}=G$ 
with elements $g_i\in H_{i+1}$ $(1\leq i\leq k)$ such that for every $1\leq i\leq k$,
 \begin{itemize}
 \item[$(i)$] every homomorphism from $H_i$ to $\mathbb{R}$ is trivial, and
 \item[$(ii)$] $g_i(\supp(H_i))\cap \supp(H_i)=\emptyset$ in $A$.
 \end{itemize}  
Then for every complete $k$-dimensional CAT(0) space $X$, for every semi-simple action of $G$ on $X$, $s$ has a fixed point. 
\end{theorem}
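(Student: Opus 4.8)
The plan is to argue by induction on $k$. It is harmless to prove the statement for complete CAT(0) spaces of dimension \emph{at most} $k$, since for a space of dimension $j<k$ one truncates the chain to $\langle s\rangle=H_0<\cdots<H_j<H_{j+1}:=G$, keeping $g_1,\dots,g_j$, and invokes the case of dimension $j$. The base case $k=0$ is immediate: a nonempty complete CAT(0) space of topological dimension $0$ is a single point (it is geodesic, hence path-connected, and a path-connected space with more than one point is not $0$-dimensional), so every action of $G$ on it, in particular $s$, fixes it; for $k=0$ the chain is just $\langle s\rangle<G$ and conditions $(i)$--$(ii)$ are vacuous.

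For the inductive step, assume the theorem in dimension $k-1$, and let $G$ act semi-simply on a complete CAT(0) space $X$ with $\dim X\le k$. If $s$ acts elliptically we are done; so by semi-simplicity we may assume $s$ is hyperbolic, with translation length $\ell>0$, and seek a contradiction. Then $\Min(s)$ is a nonempty closed convex subspace of $X$ which splits isometrically as $\Min(s)=Y\times\mathbb{R}$, with $s$ acting by $(y,t)\mapsto(y,t+\ell)$ and $Y$ complete CAT(0); since $\Min(s)\subseteq X$ and $\dim(Y\times\mathbb{R})=\dim Y+1$, we have $\dim Y\le k-1$. Set $G':=g_kH_kg_k^{-1}$. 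Condition $(ii)$ for $i=k$ gives $g_k(\supp H_k)\cap\supp H_k=\emptyset$; since $\supp(g_khg_k^{-1})=g_k(\supp h)$, every element of $G'$ has support disjoint from $\supp H_k$, and as elements of a faithful action with disjoint supports commute, $G'$ commutes elementwise with $H_k$. In particular $G'$ centralizes $s\in H_0\subseteq H_k$, so it preserves $\Min(s)$ and, by the structure of min-sets of hyperbolic isometries (Bridson--Haefliger, Chapter II.6), acts on $Y\times\mathbb{R}$ diagonally: each $g\in G'$ acts as $(g|_Y,\tau_g)$ with $g|_Y\in\Isom(Y)$ and $\tau_g$ a translation of $\mathbb{R}$ (an orientation-reversing factor is impossible, as $gsg^{-1}=s$ would then have negative translation length on the $\mathbb{R}$-factor). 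Thus $g\mapsto\tau_g$ is a homomorphism $G'\to\mathbb{R}$; since $G'\cong H_k$ has no nontrivial homomorphism to $\mathbb{R}$ by condition $(i)$ for $i=k$, each $\tau_g$ is trivial, and $G'$ acts on $Y\times\mathbb{R}$ fixing the $\mathbb{R}$-coordinate, hence acts on $Y$. This $Y$-action is semi-simple: each $g\in G'$ is semi-simple on $X$ and preserves the closed convex set $\Min(s)$, so (projecting a fixed point, resp.\ an axis, onto $\Min(s)$) is semi-simple on $\Min(s)$ with the same infimal displacement, and the displacement of $(g|_Y,\mathrm{id})$ on $Y\times\mathbb{R}$ equals $d_Y(y,g|_Yy)$, so $g|_Y$ realizes its infimal displacement on $Y$.

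Now put $s':=g_ksg_k^{-1}\in g_kH_0g_k^{-1}\subseteq G'$. Being conjugate to $s$, it is hyperbolic on $X$ with translation length $\ell$, so its infimal displacement on the invariant convex set $\Min(s)$ is still $\ell$; since it acts on $Y\times\mathbb{R}$ as $(s'|_Y,\mathrm{id})$, the isometry $s'|_Y$ is hyperbolic on $Y$ with translation length $\ell>0$. On the other hand, conjugating the original chain by $g_k$ yields
\[
\langle s'\rangle=g_kH_0g_k^{-1}<g_kH_1g_k^{-1}<\cdots<g_kH_{k-1}g_k^{-1}<g_kH_kg_k^{-1}=G',
\]
together with the elements $g_kg_ig_k^{-1}\in g_kH_{i+1}g_k^{-1}$ for $1\le i\le k-1$. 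Conditions $(i)$ and $(ii)$ are conjugation-invariant --- for $(ii)$, $g_kg_ig_k^{-1}\bigl(g_k(\supp H_i)\bigr)\cap g_k(\supp H_i)=g_k\bigl(g_i(\supp H_i)\cap\supp H_i\bigr)=\emptyset$ --- so $G'$ (acting faithfully on $A$ by restriction), the element $s'$, the chain just displayed, and the complete CAT(0) space $Y$ with $\dim Y\le k-1$ satisfy the hypotheses of the theorem in dimension $k-1$. That theorem then gives a fixed point of $s'$ in $Y$, contradicting the hyperbolicity of $s'|_Y$. Hence $s$ is not hyperbolic, which completes the induction.

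I expect the main obstacle to be the analysis of $\Min(s)$: one must verify that an isometry merely \emph{commuting} with the hyperbolic element $s$ already preserves the splitting $Y\times\mathbb{R}$ and acts diagonally with a genuine translation (not a reflection) on the $\mathbb{R}$-factor, and then that the induced $G'$-action on $Y$ is semi-simple and that $s'$ descends to a hyperbolic isometry of $Y$ of the same translation length. The algebraic points (disjoint supports force commutation; conditions $(i)$ and $(ii)$ pass to conjugates and to truncations of the chain) and the dimension identity $\dim(Y\times\mathbb{R})=\dim Y+1$ are routine.
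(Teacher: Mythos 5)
Your proposal is correct and follows essentially the same route as the paper: induction on the dimension, passing to the commuting conjugate $g_kH_kg_k^{-1}$, using the splitting $\Min(s)=Y\times\mathbb{R}$ together with condition $(i)$ to kill the $\mathbb{R}$-factor, checking semi-simplicity of the restricted action, and applying the inductive hypothesis to the conjugated chain acting on $Y$. Your extra care (truncating the chain to handle $\dim Y<k-1$, ruling out a reflection on the $\mathbb{R}$-factor, verifying semi-simplicity via projection) only makes explicit details the paper delegates to Bridson--Haefliger and leaves implicit.
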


By using Theorem~\ref{MainNew} and the following theorem by Bridson, we may confirm that many Thompson-type groups have property $\mathrm{F}\mathcal{A}_{k}$ for semi-simple actions for every $k\in\mathbb{N}$.

\begin{theorem}[\cite{Bridson}, \cite{Varghese}]\label{Bridson}
Let $k_1,\ldots, k_r$ be positive integers and let $X$ be a complete CAT(0) space of $dim(X)<k_1+\cdots +k_r$.
Let $S_1, \ldots, S_r\subset \mathrm{Isom}(X)$ be subsets with $[s_i, s_j] = 1$ for all $s_i\in S_i$ and $s_j\in S_j$ ($i\neq j$).
If each $k_i$-element subset of $S_i$ has a fixed point in $X$ for $i = 1,\ldots, r$, then for some $i$, every finite subset of $S_i$ has a fixed point.
\end{theorem}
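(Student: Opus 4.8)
The plan is to argue by contradiction, combining a geometric input with an induction on the number $r$ of families. The geometric input is a Helly-type theorem: in a complete CAT(0) space $X$ of topological dimension $n$, a minimal family of closed convex subsets with empty common intersection has at most $n+1$ members (equivalently, if every $n+1$ members of a finite family of closed convex subsets meet, then they all meet). The underlying reason is the nerve theorem: such a minimal family of $m+1$ sets has nerve $\partial\Delta^{m}\cong S^{m-1}$, and since closed convex subsets of a CAT(0) space, together with their nonempty finite intersections, are contractible, the union of the family has the \v{C}ech cohomology of $S^{m-1}$, whence $\dim X\ge m-1$. I would also use the elementary fact that commuting elliptic isometries have intersecting fixed-point sets: if $a$ and $b$ commute and $b$ is elliptic, then $b$ preserves the closed convex set $\Fix(a)$ and has bounded orbits there, hence fixes a point of it. (Every element of every $S_{i}$ is elliptic, being contained in some $k_{i}$-element subset of $S_{i}$.)

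From the Helly bound the case $r=1$ is immediate: for any finite $F\subseteq S_{1}$, each $(n+1)$-element subfamily of $\{\Fix(a):a\in F\}$ corresponds to a subset of $S_{1}$ of size $n+1\le k_{1}$, hence has a common point, so $\bigcap_{a\in F}\Fix(a)\ne\emptyset$. For the inductive step, assume the statement for $r-1$ families and suppose, for contradiction, that no $S_{i}$ has the asserted property. Pick a minimal finite subset $F_{r}=\{a_{0},\dots ,a_{m}\}\subseteq S_{r}$ with $\bigcap_{a\in F_{r}}\Fix(a)=\emptyset$; minimality and the hypothesis on $k_{r}$-element subsets force $m\ge k_{r}$. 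Each $\Fix(a_{j})$ is a closed convex set of dimension $<n$, and it is preserved by all of $S_{1},\dots ,S_{r-1}$, since these commute with $F_{r}\subseteq S_{r}$. One then wants to restrict $S_{1},\dots ,S_{r-1}$ to a suitable such fixed set $Y$ and apply the inductive hypothesis there: cross-commutativity is inherited, and by the fact about commuting elliptics every $k_{i}$-element subset of $S_{i}$ ($i\le r-1$) still has a fixed point in $Y$; if $\dim Y<k_{1}+\cdots +k_{r-1}$, then some $S_{i}$ with $i\le r-1$ fixes every finite subset inside $Y\subseteq X$, contradicting the assumption.

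The step I expect to be the main obstacle is making the dimensions balance. Passing to a single $\Fix(a_{j})$ lowers the dimension by only one, whereas discarding the $r$-th family forgoes the whole weight $k_{r}$, so one cannot simply take $Y=\Fix(a_{0})$. Instead one must combine the \emph{spherical} subsets $\Sigma_{i}:=\bigcup_{a\in F_{i}}\Fix(a)\simeq S^{m_{i}-1}$, attached to minimal bad families $F_{i}$ of all $r$ colours, into a subset of $X$ of dimension at least $m_{1}+\cdots +m_{r}-1\ge k_{1}+\cdots +k_{r}-1$ — using cross-commutativity to \emph{interlock} them (the isometries of $S_{j}$ preserve $\Sigma_{i}$ for $j\ne i$, and fixed sets of commuting elliptics keep meeting) so that their join-like union really sits in $X$ — and then a sharp form of the Helly bound (rather than the weaker $n+2$ that the bare nerve argument gives) supplies the last unit of dimension needed for $\dim X\ge k_{1}+\cdots +k_{r}$, contrary to hypothesis. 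Organising this interlocking and the attendant bookkeeping so that the strict inequality $\dim X<k_{1}+\cdots +k_{r}$ is consumed exactly is the technical heart of the argument.
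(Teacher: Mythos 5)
First, note that the paper itself offers no proof of Theorem~\ref{Bridson}; it is imported from \cite{Bridson} and \cite{Varghese}. So your proposal has to stand on its own, and as written it has a genuine gap, located exactly where you yourself place ``the technical heart of the argument''. Concretely: (a) you use the Helly input in the sharp form (a minimal family of closed convex subsets of a complete CAT(0) space of dimension $n$ with empty intersection has at most $n+1$ members), but the nerve computation you give only yields $n+2$, as you concede; the missing unit of dimension needs a further step --- e.g.\ Farb's homological argument, where $U=\bigcup_j C_j$ is closed in the contractible space $X$, so $\check{H}^{m-1}(U)\neq 0$ forces $\check{H}^{m}(X,U)\neq 0$, and covering dimension $\leq d$ kills \v{C}ech cohomology of pairs above degree $d$ --- or an appeal to \cite{Farb}, which in addition requires reducing the possibly unbounded fixed-point sets to bounded ones (intersect with a large ball containing the finitely many witness points). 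Without this, even your base case $r=1$ is not closed. (b) The induction on $r$ you set up cannot balance dimensions, as you observe, and the incidental claim that $\Fix(a_j)$ has dimension $<n$ is false in general (the isometry of a tripod exchanging two legs fixes a subset of full dimension). (c) The proposed repair --- ``interlocking'' the $r$ spherical configurations and invoking a sharp bound --- is precisely the content of the theorem (it is what makes the $k_i$ add), and it is left entirely unexecuted; calling it bookkeeping understates it.

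For comparison, the argument in the cited sources (and the natural completion of your sketch) needs no induction on $r$: assuming every $S_i$ contains a finite subset with no common fixed point, choose such $F_i\subset S_i$ of minimal cardinality, so $|F_i|=m_i+1$ with $m_i\geq k_i$, and consider the single family $\{\Fix(a)\mid a\in F_1\cup\cdots\cup F_r\}$. A subfamily has nonempty intersection exactly when it omits at least one element of each $F_i$: one direction is the badness of $F_i$, and the other uses cross-colour commutation together with the Bruhat--Tits fixed-point theorem applied to the group generated by a proper subset $T_i\subsetneq F_i$ (it fixes a point of $X$ by minimality, hence has bounded orbits, hence fixes a point of the nonempty closed convex set $\bigcap_{j<i}\Fix(T_j)$ that it preserves); note that you need this for whole commuting subsets, not just for the pair version you state. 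Consequently the nerve is the join $\partial\Delta^{m_1}\ast\cdots\ast\partial\Delta^{m_r}\cong S^{m_1+\cdots+m_r-1}$, so (via a nerve/Mayer--Vietoris argument valid for this finite cover by closed convex sets) the union has nontrivial \v{C}ech cohomology in degree $m_1+\cdots+m_r-1$, and the same relative-cohomology dimension step as in (a) gives $\dim X\geq m_1+\cdots+m_r\geq k_1+\cdots+k_r$, contradicting the hypothesis. Your proposal names the right ingredients, but the join computation of the nerve, the cross-colour intersection lemma, and the sharp dimension bound \emph{are} the proof, and they are missing.
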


\begin{corollary}\label{thompson}
Thompson's group $T$ and $V$ 
have property $\mathrm{F}\mathcal{A}_{k}$ for semi-simple actions for every $k\in \mathbb{N}$.
\end{corollary}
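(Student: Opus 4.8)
The plan is to combine Theorem~\ref{MainNew} with Theorem~\ref{Bridson}. Fix a semi-simple action of $G\in\{T,V\}$ on a complete CAT(0) space $X$ with $\dim(X)=k$, and recall that $V$ acts faithfully on the Cantor set $\mathfrak{C}$ and $T$ on the circle $S^1$. Since $T$ and $V$ are simple, hypothesis $(i)$ of Theorem~\ref{MainNew} holds automatically for $G$ and for every subgroup of $G$ isomorphic to $T$, $V$, or the simple group $[F,F]$. I would argue in three stages: $(a)$ via Theorem~\ref{MainNew}, every element of $G$ supported on a ``small'' set is elliptic (has a fixed point in $X$); $(b)$ via Theorem~\ref{Bridson}, the corresponding ``small'' subgroups are themselves elliptic; $(c)$ assemble a global fixed point for $G$.

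For $(a)$, let $s\in V$ have $\supp(s)$ contained in a clopen set $D\subsetneq\mathfrak{C}$ with nonempty complement, and apply Theorem~\ref{MainNew} to $s$ with the chain $\langle s\rangle=H_0<H_1<\cdots<H_k<H_{k+1}=V$ defined by $H_j=V_{D_j}$, where $V_{D_j}$ is the copy of $V$ supported on $D_j$ and fixing $\mathfrak{C}\setminus D_j$ pointwise, $D_1=D$, and $D_{j+1}=D_j\sqcup D_j'$ for a clopen copy $D_j'$ of $D_j$ disjoint from $D_j$---chosen, as one may since $\mathfrak{C}$ contains infinitely many pairwise disjoint clopen copies of itself, so that $D_{j+1}$ is still proper with nonempty complement. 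Then $s\in H_1$, each $H_j\cong V$ is simple so $(i)$ holds, $\supp(H_j)=D_j$, for $1\le j\le k-1$ the exchange $g_j\in H_{j+1}$ of $D_j$ and $D_j'$ satisfies $g_j(\supp(H_j))\cap\supp(H_j)=D_j'\cap D_j=\emptyset$, and $g_k\in H_{k+1}=V$ can be taken to carry the proper clopen set $D_k=\supp(H_k)$ off itself; so Theorem~\ref{MainNew} gives a fixed point for $s$. Finite-order elements are elliptic because a finite orbit has a circumcenter. For $T$ the picture is parallel: an element supported on a proper closed arc is conjugate to one supported on an arbitrarily short arc, hence lies in a copy of $[F,F]$ on a slightly longer proper arc, proper arcs of $S^1$ can be ``doubled'' finitely often, and the same chain applies with $[F,F]$-copies replacing the $V_{D_j}$; an element of $T$ with no fixed point on $S^1$ has rational rotation number, so some power is of the previous type, and the element is elliptic because $\ell(h^q)=q\,\ell(h)$ for semi-simple isometries. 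Since the elements of these forms are conjugation-invariant and generate a nontrivial, hence the whole, simple group, $G$ is generated by elliptic elements.

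For $(b)$, fix a clopen $D$ with $D$ and $\mathfrak{C}\setminus D$ nonempty, choose $k+1$ pairwise disjoint clopen subsets $D_1,\dots,D_{k+1}\subseteq D$, and note that $V_{D_1},\dots,V_{D_{k+1}}$ commute pairwise and, by $(a)$, have every element elliptic. Theorem~\ref{Bridson}, with $S_i=V_{D_i}$, $k_i=1$, and $\dim(X)=k<k+1$, yields an index $i_0$ such that every finite subset of $V_{D_{i_0}}$ has a fixed point; as $V_{D_{i_0}}\cong V$ is finitely generated, a finite generating set is one such subset, so $V_{D_{i_0}}$ is elliptic, and conjugating---using that $V$ acts transitively on the nonempty proper clopen subsets of $\mathfrak{C}$---makes every $V_D$ elliptic; likewise for the $[F,F]$-subgroups on proper arcs for $T$. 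For $(c)$ I would use two elementary facts in complete CAT(0) spaces: if commuting subgroups $\Gamma_1,\Gamma_2$ are elliptic then so is $\Gamma_1\Gamma_2$, because $\Gamma_2$ preserves $\Fix(\Gamma_1)$ and the nearest-point projection onto $\Fix(\Gamma_1)$ of a point of $\Fix(\Gamma_2)$ lies in $\Fix(\Gamma_1)\cap\Fix(\Gamma_2)$; and a finite group acting on a nonempty closed convex set fixes a point of it. Applied to the subgroup $G^{(n)}$ of $V$ preserving the $n$-th level partition of $\mathfrak{C}$---a commuting product of $2^n$ copies of $V$ as in $(b)$, extended by the finite group $S_{2^n}$ permuting the factors---these show each $G^{(n)}$ is elliptic; then, since $V$ is finitely generated and is generated by finitely many of its elliptic subgroups (of the forms $V_D$ and $G^{(n)}$), one organizes the resulting finite join so that each successive subgroup either commutes with, or is a finite group normalizing the fixed-point set of, the join of the previous ones, producing a global fixed point for $V$. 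The same scheme applies to $T$.

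The step I expect to be the main obstacle is the assembly $(c)$: a finite join of elliptic subgroups need not be elliptic, so one must choose the generating family carefully enough that the projection-and-circumcenter mechanism chains all the way through---and it is exactly here that the self-similar structure of $T$ and $V$ (the level-partition subgroups and their finite ``permutation'' overgroups) must be exploited. A secondary difficulty, peculiar to $T$, is that $T$ has no proper copy of itself with proper support, so the simple subgroups $[F,F]$ on proper arcs must serve in place of the $V_D$'s, and one still has to verify that finitely many of the resulting elliptic subgroups generate $T$.
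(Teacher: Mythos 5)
Your stages (a) and (b) are broadly in the spirit of the paper's argument and are essentially workable for $V$: the chain $H_j=V_{D_j}$ on doubled clopen sets, with simplicity giving hypothesis $(i)$ of Theorem~\ref{MainNew} and the swap of $D_j$ and $D_j'$ giving $(ii)$, is a legitimate variant of what the paper does with nested dyadic intervals, and the Bridson argument with $k+1$ disjointly supported commuting copies is exactly the paper's mechanism. Two repairs are needed for $T$, though: the copies of $[F,F]$ on proper arcs are \emph{not} finitely generated, so your step ``a finite generating set is one such subset'' fails for them (you should use the finitely generated copies $F^I$ instead, whose elements are still covered by (a) since they are identity near the endpoints of a slightly larger arc); and the claim that a power of a fixed-point-free element of $T$ is ``of the previous type'' is unjustified, since an element fixing a single point can have support accumulating at that point, so it need not lie in any $[F,F]$-copy on a proper arc (the rationality of rotation numbers in $T$ is itself a nontrivial quoted fact). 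These are secondary, because generation by elliptic elements does not require every element to be elliptic.

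The genuine gap is the assembly (c), and it is not merely an unfinished detail: the scheme you describe cannot terminate. At the final step you would have $V=\langle H,K\rangle$ where $K$ either commutes with the previous join $H$ or is a finite subgroup that you can only guarantee preserves $\Fix(H)$ (for an arbitrary, unknown semi-simple action) by knowing that $K$ normalizes $H$; in either case $H\trianglelefteq\langle H,K\rangle=V$, so by simplicity $H$ is trivial or all of $V$, and the chain of proper elliptic subgroups can never reach $T$ or $V$. The paper avoids any such build-up: by Lemma~\ref{size} (Brin's small-support generation) one takes a \emph{finite} generating set $S$ whose elements have supports of size less than $1/(k+1)$; Theorem~\ref{MainNew} (after conjugating each support into a small interval via Lemma~\ref{EP}) makes every $s\in S$ elliptic; then for each $(k{+}1)$-element subset $S_{k+1}\subset S$ the union of supports misses a dyadic rational, so by Lemma~\ref{EP}~$(1)$ it can be conjugated into each of $k+1$ pairwise disjoint open sets $J_1,\dots,J_{k+1}$, giving $k+1$ pairwise commuting elliptic families; Theorem~\ref{Bridson} with $k_i=1$ and $\dim X=k<k+1$ yields a common fixed point for one conjugate, hence for $S_{k+1}$; and a second application of Theorem~\ref{Bridson} with $r=1$, $k_1=k+1$ upgrades this to a common fixed point of the finite set $S$, i.e.\ a global fixed point. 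This conjugation-plus-double-Helly step applied to a small-support finite generating set is the idea your proposal is missing, and your $G^{(n)}$/normalizer construction cannot substitute for it.
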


Similar arguments show that Brin-Higman-Thompson groups $nV_q$ and Nekrashevych-R\"over groups $V_q(G)$ have property $\mathrm{F}\mathcal{A}_{k}$ for semi-simple actions for every $k\in \mathbb{N}$.

$T$, $V$ and their generalizations are known to have property $\mathrm{FA}$ (\cite{Farley} for $T$ and $V$, \cite{K} for $nV$).
It is also known that any isometric action of $V$ on a finite-dimensional CAT(0) cube complex fixes a point (\cite{Genevois}).
Corollary~\ref{thompson} generalizes these results.

Corollary~\ref{thompson} partially answers to a question in \cite{Genevois} (Question 1.5),
which asks whether $T$ is hyperbolically elementary and
whether every isometric action of $T$ on a
$k$-dimensional CAT(0) cube complex has a global fixed point, for every $k\geq 0$.

I would like to thank Takuya Sakasai, Javier Aramayona and Yash Lodha for helpful discussions and comments.
I would like to thank Anthony Genevois who let me know about results in \cite{Genevois} and gave many comments on this paper.
This work was supported by JSPS KAKENHI Grant Number 17J07711 and the Program for Leading Graduate Schools, MEXT, Japan.

\section{Isometries of CAT(0) spaces and a fixed point criterion}

In this section, we compile some basic facts on isometries of complete CAT(0) spaces, mainly from \cite{BH}, and then proceed to prove Theorem~\ref{MainNew}.

Let $(X,d)$ be a metric space and let $\gamma$ be an isometry of $X$. 
The {\it translation length} of $\gamma$ is the number $|\gamma| = \Inf\{d(x, \gamma(x)) \mid x \in X\}$. 
We write $\Min(\gamma)=\{x\in X \mid d(x, \gamma(x))=|\gamma|\}$. 

\begin{lemma}[Proposition 6.2 of \cite{BH}]\label{Min}
Let $X$ be a CAT(0) space and $\gamma$ be an isometry of $X$.
$\Min(\gamma)$ is a closed convex subset of $X$.
\end{lemma}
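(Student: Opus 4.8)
The plan is to realise $\Min(\gamma)$ as a sublevel set of the displacement function $f\colon X\to[0,\infty)$, $f(x)=d(x,\gamma x)$, and to show that $f$ is continuous and convex; both assertions then follow formally, because $\Min(\gamma)=f^{-1}\bigl(\{|\gamma|\}\bigr)=f^{-1}\bigl((-\infty,|\gamma|]\bigr)$ (as $f\ge|\gamma|$ everywhere), and a sublevel set of a continuous convex function on a geodesic space is closed and convex. (The degenerate case $\Min(\gamma)=\emptyset$ is trivial, and is in any event covered by the argument below.)

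Continuity of $f$ is immediate, $\gamma$ and $d$ being continuous, so $\Min(\gamma)=f^{-1}\bigl((-\infty,|\gamma|]\bigr)$ is closed. For convexity I would fix $x,y\in X$ and let $\sigma\colon[0,1]\to X$ be the constant-speed geodesic from $x$ to $y$. Since $\gamma$ is an isometry, $\gamma\circ\sigma$ is the constant-speed geodesic from $\gamma x$ to $\gamma y$, so $\sigma$ and $\gamma\circ\sigma$ form a pair of geodesics parametrised over the same interval. By the convexity of the metric in a CAT(0) space (\cite{BH}, Proposition II.2.2), $t\mapsto d\bigl(\sigma(t),(\gamma\circ\sigma)(t)\bigr)=f\bigl(\sigma(t)\bigr)$ is convex on $[0,1]$, hence $f\bigl(\sigma(t)\bigr)\le(1-t)f(x)+tf(y)$ for all $t\in[0,1]$. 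If in addition $x,y\in\Min(\gamma)$, this yields $f\bigl(\sigma(t)\bigr)\le(1-t)|\gamma|+t|\gamma|=|\gamma|$, and since $f\ge|\gamma|$ always we get $f\bigl(\sigma(t)\bigr)=|\gamma|$, i.e.\ $\sigma(t)\in\Min(\gamma)$. Thus $[x,y]\subset\Min(\gamma)$, proving convexity.

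There is no genuine obstacle: the only non-formal ingredient is the convexity of the metric in CAT(0) spaces, which I would simply quote from \cite{BH}. The one thing to state with care is that an isometry sends a constant-speed geodesic to a constant-speed geodesic with the same affine parametrisation, so that $\sigma$ and $\gamma\circ\sigma$ are compared at equal parameter values — exactly the hypothesis under which the CAT(0) convexity inequality applies.
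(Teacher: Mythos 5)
Your proof is correct and is exactly the standard argument behind the cited result: the paper gives no proof of its own (it quotes Proposition II.6.2 of \cite{BH}), and the proof there is precisely your route via convexity and continuity of the displacement function $x\mapsto d(x,\gamma x)$, using convexity of the metric (\cite{BH}, Proposition II.2.2) and the fact that $\Min(\gamma)$ is the sublevel set at the infimum $|\gamma|$. Nothing is missing; your remark about comparing $\sigma$ and $\gamma\circ\sigma$ at equal parameter values is the right point of care.
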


An isometry $\gamma$ 
is called {\it semi-simple} if $\Min(\gamma)$ is non-empty.
A semi-simple isometry $\gamma$ is called {\it hyperbolic} if $|\gamma| >0$, and called {\it elliptic} if $|\gamma|=0$, i.e.\ $\gamma$ has a fixed point.
We say that an isometric action of a group $G$ on a metric space is {\it semi-simple} if every $g\in G$ is either elliptic or hyperbolic. 

The following lemma describes the structure of $\Min(\gamma)$ for hyperbolic isometries.  
\begin{lemma}\label{BH2}
Let $X$ be a CAT(0) space and $\gamma$ be a hyperbolic isometry of $X$. 
\begin{itemize}
\item[$(1)$] $\Min(\gamma)$ is isometric to a product $Y\times \mathbb{R}$, where $Y=Y\times \{0\}$ is a closed convex subspace of $X$.
If $\dim(X)<\infty$, then $\dim(Y)<\dim(X)$.
\item[$(2)$] Every isometry $\alpha$ that commutes with $\gamma$ leaves $\Min(\gamma)=Y \times \mathbb{R}$ invariant, 
and its restriction to $Y \times \mathbb{R}$ is of the form $(\alpha_1, \alpha_2)$, 
where $\alpha_1$ is an isometry of $Y$ and $\alpha_2$ is a translation on $\mathbb{R}$.
\end{itemize}
\end{lemma}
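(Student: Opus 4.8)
The plan is to construct the splitting of $\Min(\gamma)$ directly out of the \emph{axes} of $\gamma$, and then to read off the behaviour of commuting isometries from that canonical decomposition. First I would record that a hyperbolic $\gamma$ admits an axis: choosing $p\in\Min(\gamma)$ (nonempty by hypothesis) and concatenating the geodesic segments $[\gamma^n p,\gamma^{n+1}p]$ for $n\in\mathbb{Z}$, the convexity of the metric on a CAT(0) space together with the fact that $p$ realizes the infimal displacement $|\gamma|$ forces this piecewise geodesic to be a genuine geodesic line, on which $\gamma$ acts as translation by $|\gamma|$. Conversely every point of such an axis displaces exactly $|\gamma|$, so $\Min(\gamma)$ is precisely the union of the axes of $\gamma$.

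For part $(1)$, I would next observe that any two axes are parallel: since $\gamma$ translates each of them by the same amount, the convex distance function between them is bounded, hence constant, so they remain a fixed distance apart. By the Flat Strip Theorem, two parallel geodesic lines in a CAT(0) space span a flat strip, and applying this across all pairs of axes, together with the convexity of $\Min(\gamma)$ from Lemma~\ref{Min}, yields an isometry $\Min(\gamma)\cong Y\times\mathbb{R}$ in which the $\mathbb{R}$-factor is the common translation direction and $Y$ (a cross-section) is a closed convex subspace of $X$, identified with $Y\times\{0\}$. For the dimension statement, $Y\times\mathbb{R}=\Min(\gamma)$ is a subspace of $X$, so monotonicity of topological dimension gives $\dim(Y\times\mathbb{R})\le\dim(X)$; combined with the product formula $\dim(Y\times\mathbb{R})=\dim(Y)+1$ (valid for the factor $\mathbb{R}$ when $\dim(X)<\infty$) this gives $\dim(Y)=\dim(Y\times\mathbb{R})-1\le\dim(X)-1<\dim(X)$.

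For part $(2)$, let $\alpha$ commute with $\gamma$. For any axis $\ell$ the relation $\gamma(\alpha\ell)=\alpha(\gamma\ell)$ shows $\alpha\ell$ is again a geodesic line translated by $|\gamma|$, hence an axis; thus $\alpha$ permutes axes and (using $\alpha^{-1}$ as well) preserves $\Min(\gamma)=Y\times\mathbb{R}$. Since $\alpha$ sends axes to axes, it maps the $\mathbb{R}$-fibers $\{y\}\times\mathbb{R}$ to $\mathbb{R}$-fibers, i.e.\ it preserves the canonical product foliation; by the rigidity of the product decomposition it therefore descends to an isometry $\alpha_1$ of the leaf space $Y$ and restricts to an isometry $\alpha_2$ of $\mathbb{R}$, so that $\alpha|_{Y\times\mathbb{R}}=(\alpha_1,\alpha_2)$ with $\alpha_1\in\Isom(Y)$. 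Finally, writing $\gamma=(\mathrm{id}_Y,\tau)$ with $\tau$ the translation by $|\gamma|\neq0$, the identity $\alpha\gamma=\gamma\alpha$ forces $\alpha_2\tau=\tau\alpha_2$; since a reflection of $\mathbb{R}$ never commutes with a nontrivial translation, $\alpha_2$ must be a translation, as claimed.

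The heart of the argument, and the step I expect to require the most care, is the global product decomposition in $(1)$: passing from the local data of pairwise flat strips between axes to a single isometry $\Min(\gamma)\cong Y\times\mathbb{R}$. This is where convexity of $\Min(\gamma)$, the Flat Strip Theorem, and uniqueness of geodesics in CAT(0) spaces (all standard, \cite{BH}) must be combined, and the same rigidity of product splittings is what makes the decomposition of $\alpha$ in $(2)$ work. By comparison, the existence of axes and the elementary classification of isometries of $\mathbb{R}$ are routine.
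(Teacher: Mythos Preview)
Your proposal is correct and follows the same line as the paper. The paper simply cites Bridson--Haefliger (Theorems~II.2.14 and II.6.8 of \cite{BH}) for the product decomposition of $\Min(\gamma)$ and the splitting of commuting isometries, and only writes out the dimension inequality $\dim(Y)<\dim(X)$ explicitly (via $\dim(Y\times\mathbb{R})=\dim(Y)+1$ from \cite{M} and $\dim(Y\times\mathbb{R})\le\dim(X)$ for the closed subspace $\Min(\gamma)\subset X$); your sketch unpacks those citations into the standard axis/Flat Strip argument and then performs the identical dimension count.
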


\begin{proof}
$(1)$ The proof of the existence of the product decomposition is in \cite{BH} (Theorem 2.14 and Theorem 6.8).
We observe that $Y$ has smaller dimension than $X$.
By a general fact on the topological dimension (\cite{M}), $\dim(Y\times \mathbb{R})=\dim(Y)+\dim(\mathbb{R})=\dim(Y)+1$.
Since $Y\times \mathbb{R}$ is a closed subspace of $X$, $\dim(Y\times \mathbb{R})\leq \dim(X)$.
It follows that $\dim(Y)<\dim(X)$.

$(2)$ The proof is in \cite{BH} (Theorem 6.8).
\end{proof}

\begin{lemma}[Propositions 6.2 and 6.9 of \cite{BH}]\label{BH3}
Let $X$ be a CAT(0) space and $\gamma$ be a hyperbolic isometry of $X$. 
\begin{itemize}
\item[$(1)$] If $C\subset X$ is non-empty, complete, convex and $\gamma$-invariant, then 
$\gamma$ is semi-simple if and only if $\gamma|_C$ is semi-simple.
\item[$(2)$] If $X$ splits as a product $X'\times X''$, for every isometry $\gamma=(\gamma', \gamma'')$ preserving the decomposition,
$\gamma$ is semi-simple if and only if $\gamma'$ and $\gamma''$ are semi-simple.
\end{itemize}
\end{lemma}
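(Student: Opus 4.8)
The plan is to treat the two parts separately, in each case reducing the question ``is $\Min(\gamma)$ nonempty?'' to a comparison of translation lengths that can be controlled by an explicit geometric device: nearest-point projection for $(1)$ and the product metric for $(2)$. Throughout I write $|\gamma|_X$ and $|\gamma|_C$ for the translation lengths computed over $X$ and over $C$, respectively.

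For $(1)$, the key tool is the nearest-point projection $\pi\colon X\to C$, which is well-defined and $1$-Lipschitz precisely because $C$ is complete and convex in a CAT(0) space (\cite{BH}). First I would record the equivariance $\pi\circ\gamma=\gamma\circ\pi$: since $C$ is $\gamma$-invariant and $\gamma$ is an isometry, $\gamma(\pi x)$ is the unique point of $C$ nearest to $\gamma x$, hence equals $\pi(\gamma x)$. Combining equivariance with the $1$-Lipschitz property gives, for every $x\in X$ and $x'=\pi(x)$,
\[
d(x',\gamma x')=d(\pi x,\pi(\gamma x))\leq d(x,\gamma x).
\]
Taking infima yields $|\gamma|_C\leq|\gamma|_X$, and the reverse inequality is automatic from $C\subset X$, so $|\gamma|_C=|\gamma|_X$. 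Both implications then follow at once: if $\gamma|_C$ attains its minimum at some $c\in C$, then $c$ realizes $|\gamma|_X$ in $X$, so $\Min(\gamma)\neq\emptyset$; conversely, projecting any point of $\Min(\gamma)\subset X$ into $C$ produces a point realizing $|\gamma|_C$, so $\Min(\gamma|_C)\neq\emptyset$.

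For $(2)$, I would compute directly with the product metric $d((x',x''),(y',y''))^2=d'(x',y')^2+d''(x'',y'')^2$ on $X=X'\times X''$. For $z=(x',x'')$ this gives $d(z,\gamma z)^2=d'(x',\gamma'x')^2+d''(x'',\gamma''x'')^2$, and since the two summands depend on independent variables the infimum factorizes, so $|\gamma|^2=|\gamma'|^2+|\gamma''|^2$. It follows that $\Min(\gamma)=\Min(\gamma')\times\Min(\gamma'')$: if $d'(x',\gamma'x')^2+d''(x'',\gamma''x'')^2=|\gamma'|^2+|\gamma''|^2$ while each summand is bounded below by the corresponding squared translation length, then each summand must be minimal, and the converse is clear. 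Hence $\Min(\gamma)$ is nonempty if and only if both $\Min(\gamma')$ and $\Min(\gamma'')$ are nonempty, which is the stated equivalence.

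The main obstacle I anticipate lies entirely in part $(1)$, namely establishing the equivariance and $1$-Lipschitz properties of $\pi$ carefully, since these are exactly what upgrade the trivial inequality $|\gamma|_C\geq|\gamma|_X$ to the equality $|\gamma|_C=|\gamma|_X$ that drives both directions; part $(2)$ is then a routine computation with the $\ell^2$ product metric. I also note that neither argument uses hyperbolicity of $\gamma$ as opposed to it being a general isometry, so the hyperbolicity hypothesis only serves to make the ``$\gamma$ semi-simple'' side of each equivalence automatic.
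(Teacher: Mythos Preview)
Your proof is correct. The paper does not supply its own proof of this lemma; it simply cites Propositions~6.2 and~6.9 of \cite{BH}, and your arguments---nearest-point projection onto $C$ for part~(1), and the $\ell^2$-splitting of the displacement function on a metric product for part~(2)---are exactly the standard ones given in that reference. Your closing observation that hyperbolicity of $\gamma$ plays no role is also accurate: the results in \cite{BH} are stated for arbitrary isometries, and here the hypothesis only renders one side of each biconditional automatic.
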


\begin{proof}[Proof of Theorem~\ref{MainNew}]
The proof proceeds by induction on $k$.
When $k=0$, the lemma is trivial. 
We assume that the lemma holds for all $l<k$.

We fix a complete CAT($0$) space $X$ of $\dim(X)=k$ and a semi-simple action of $G$ on $X$.
Assume to the contrary that $s$ is not elliptic. 
By the semi-simplicity of the action, $s$ is a hyperbolic element.

By the assumption, there are $H_k<G$ and $g_k\in G$ such that $g_k(\supp(H_k))$ and $\supp(H_k)$ are disjoint.
Let $H_k^{g_k}=\{g_k h g_k^{-1}\mid h\in H_k\}$. 
Since $\supp(H_k^{g_k})=g_k(\supp(H_k))$ and the action of $G$ on $A$ is faithful, 
$H_k^{g_k}$ and $H_k$ commute in $G$.
By Lemma \ref{BH2}, $H_k^{g_k}$ preserves $\Min(s)=Y\times \mathbb{R}$, 
and the action of $H_k^{g_k}$ on $\Min(s)$ splits into two actions:
an isometric action on $Y$ and an action on $\mathbb{R}$ by translations.

We first observe the action of $H_k^{g_k}$ on $\mathbb{R}$ by translations.
Since every homomorphism of $H_k$ on $\mathbb{R}$ is trivial, $H_k^{g_k}$ fixes every point in $\mathbb{R}$.

We next observe the action of $H_k^{g_k}$ on $Y$.
The action is semi-simple by Lemma \ref{BH3}.
Since $Y \times \{0\}$ is a closed convex subspace of $X$, $Y$ is a complete CAT($0$) space.
By Lemma \ref{BH2} $(1)$, the topological dimension of $Y$ is less than $k$.
By the inductive assumption to the action of $H_k^{g_k}$ on $Y$,
$s^{g_k}$ fixes a point in $Y$. 

It follows that $s^{g_k}$ fixes a point in $X$, which contradicts to the assumption that $s$ is not elliptic.
\end{proof}

\section{Applications for Thompson-type groups}
\subsection{Thompson's groups $F$, $T$ and $V$}

Classically, there are three types of Thompson's groups: $F$, $T$ and $V$.
We give definitions of these groups, based on $\cite{CFP}$.
A {\it standard dyadic interval}\/ is an interval of the form $[a/{2^b}, {(a+1)}/{2^b}]$ in the unit interval $[0,1]$, where $a\in \mathbb{Z}$ and $b\in \mathbb{N}$.   
A {\it dyadic rational}\/ is a rational number of the form $a/{2^b}$ in $[0,1]$.

{\it Thompson's group $F$} is a group of piecewise linear homeomorphisms of $[0, 1]$,
differentiable with derivatives of powers of 2 on finitely many standard dyadic intervals. 

{\it Thompson's group $T$} is a group of piecewise linear homeomorphisms of $S^1$, 
differentiable with derivatives of powers of 2 on finitely many standard dyadic intervals. 
We regard $S^1$ as the unit interval with identified endpoints.

{\it Thompson's group $V$} is a group of piecewise linear right-continuous bijections from $[0,1)$ to itself,
differentiable with derivatives of powers of 2 on finitely many standard dyadic intervals. 

We identify divisions of $[0,1]$ into finitely many standard dyadic intervals with finite rooted binary trees.
Under this identification, we represent elements of Thompson's groups by a pair of finite rooted binary trees with the same number of leaves.
The following figure shows generators of Thompson's groups, represented as tree pairs. 
$F$ is generated by $A$ and $B$ of Figure \ref{generators_of_T}.
$T$ is generated by $A$, $B$ and $C$. $V$ is generated by $A$, $B$, $C$ and $\pi_0$. 

\begin{figure}
\begin{center}
{\unitlength 0.1in%
\begin{picture}(41.0200,9.8500)(0.7100,-11.8500)%
\put(2.0600,-3.7900){\makebox(0,0){\small $A=$}}
%

\special{pn 8}%
\special{pa 1780 200}%
\special{pa 1633 324}%
\special{fp}%
\special{pa 1780 200}%
\special{pa 1926 324}%
\special{fp}%

%

\special{pn 8}%
\special{pa 1633 324}%
\special{pa 1487 446}%
\special{fp}%
\special{pa 1633 324}%
\special{pa 1780 446}%
\special{fp}%

\put(14.8700,-5.0200){\makebox(0,0){\small $1$}}
\put(17.8000,-5.0200){\makebox(0,0){\small $2$}}
\put(19.2600,-3.7900){\makebox(0,0){\small $3$}}
%

\special{pn 8}%
\special{pa 1048 379}%
\special{pa 1340 379}%
\special{fp}%
\special{sh 1}%
\special{pa 1340 379}%
\special{pa 1273 359}%
\special{pa 1287 379}%
\special{pa 1273 399}%
\special{pa 1340 379}%
\special{fp}%
\special{pa 1633 200}%
\special{pa 1633 200}%
\special{fp}%

%

\special{pn 8}%
\special{pa 667 200}%
\special{pa 521 324}%
\special{fp}%
\special{pa 667 200}%
\special{pa 813 324}%
\special{fp}%
\special{pa 813 324}%
\special{pa 667 446}%
\special{fp}%
\special{pa 813 324}%
\special{pa 960 446}%
\special{fp}%

\put(9.6000,-5.0200){\makebox(0,0){\small $3$}}
\put(6.6700,-5.0200){\makebox(0,0){\small $2$}}
\put(5.2100,-3.7900){\makebox(0,0){\small $1$}}
%

\special{pn 8}%
\special{pa 4027 324}%
\special{pa 3880 446}%
\special{fp}%
\special{pa 4027 324}%
\special{pa 4173 446}%
\special{fp}%

%

\special{pn 8}%
\special{pa 3880 446}%
\special{pa 3733 570}%
\special{fp}%
\special{pa 3880 446}%
\special{pa 4027 570}%
\special{fp}%

\put(37.3300,-6.2500){\makebox(0,0){\small $2$}}
\put(40.2700,-6.2500){\makebox(0,0){\small $3$}}
\put(41.7300,-5.0200){\makebox(0,0){\small $4$}}
%

\special{pn 8}%
\special{pa 3243 379}%
\special{pa 3536 379}%
\special{fp}%
\special{sh 1}%
\special{pa 3536 379}%
\special{pa 3469 359}%
\special{pa 3483 379}%
\special{pa 3469 399}%
\special{pa 3536 379}%
\special{fp}%
\special{pa 3829 200}%
\special{pa 3829 200}%
\special{fp}%

%

\special{pn 8}%
\special{pa 2951 324}%
\special{pa 2804 446}%
\special{fp}%
\special{pa 2951 324}%
\special{pa 3097 446}%
\special{fp}%
\special{pa 3097 446}%
\special{pa 2951 570}%
\special{fp}%
\special{pa 3097 446}%
\special{pa 3243 570}%
\special{fp}%

\put(32.4300,-6.2500){\makebox(0,0){\small $4$}}
\put(29.5100,-6.2500){\makebox(0,0){\small $3$}}
\put(28.0400,-5.0200){\makebox(0,0){\small $2$}}
%

\special{pn 8}%
\special{pa 4027 324}%
\special{pa 3880 200}%
\special{fp}%
\special{pa 3880 200}%
\special{pa 3733 324}%
\special{fp}%

\put(37.3300,-3.7900){\makebox(0,0){\small $1$}}
%

\special{pn 8}%
\special{pa 3763 324}%
\special{pa 3763 324}%
\special{fp}%

%

\special{pn 8}%
\special{pa 2951 324}%
\special{pa 2804 200}%
\special{fp}%
\special{pa 2804 200}%
\special{pa 2658 324}%
\special{fp}%

\put(26.5800,-3.7900){\makebox(0,0){\small $1$}}
\put(23.6500,-3.7900){\makebox(0,0){\small $B=$}}
%

\special{pn 8}%
\special{pa 682 939}%
\special{pa 535 1062}%
\special{fp}%
\special{pa 682 939}%
\special{pa 828 1062}%
\special{fp}%
\special{pa 828 1062}%
\special{pa 682 1185}%
\special{fp}%
\special{pa 828 1062}%
\special{pa 975 1185}%
\special{fp}%

%

\special{pn 8}%
\special{pa 1647 939}%
\special{pa 1502 1062}%
\special{fp}%
\special{pa 1647 939}%
\special{pa 1794 1062}%
\special{fp}%
\special{pa 1794 1062}%
\special{pa 1647 1185}%
\special{fp}%
\special{pa 1794 1062}%
\special{pa 1941 1185}%
\special{fp}%

\put(2.2800,-10.6200){\makebox(0,0){\small $C=$}}
\put(5.3500,-11.1100){\makebox(0,0){\small $0$ }}
\put(6.8200,-12.3500){\makebox(0,0){\small $1$}}
\put(9.7500,-12.3500){\makebox(0,0){\small $2$}}
\put(15.0200,-11.1100){\makebox(0,0){\small $1$}}
\put(16.4700,-12.3500){\makebox(0,0){\small $2$ }}
\put(19.4100,-12.3500){\makebox(0,0){\small $0$}}
%

\special{pn 8}%
\special{pa 2965 939}%
\special{pa 2819 1062}%
\special{fp}%
\special{pa 2965 939}%
\special{pa 3111 1062}%
\special{fp}%
\special{pa 3111 1062}%
\special{pa 2965 1185}%
\special{fp}%
\special{pa 3111 1062}%
\special{pa 3258 1185}%
\special{fp}%

%

\special{pn 8}%
\special{pa 3843 939}%
\special{pa 3697 1062}%
\special{fp}%
\special{pa 3843 939}%
\special{pa 3990 1062}%
\special{fp}%
\special{pa 3990 1062}%
\special{pa 3843 1185}%
\special{fp}%
\special{pa 3990 1062}%
\special{pa 4137 1185}%
\special{fp}%

\put(23.9500,-10.6200){\makebox(0,0){\small $\pi_0=$}}
\put(28.1900,-11.1100){\makebox(0,0){\small $1$ }}
\put(29.6500,-12.3500){\makebox(0,0){\small $2$}}
\put(32.5800,-12.3500){\makebox(0,0){\small $3$}}
\put(36.9700,-11.1100){\makebox(0,0){\small $2$}}
\put(38.4300,-12.3500){\makebox(0,0){\small $1$ }}
\put(41.3700,-12.3500){\makebox(0,0){\small $3$}}
%

\special{pn 8}%
\special{pa 1048 1062}%
\special{pa 1340 1062}%
\special{fp}%
\special{sh 1}%
\special{pa 1340 1062}%
\special{pa 1273 1042}%
\special{pa 1287 1062}%
\special{pa 1273 1082}%
\special{pa 1340 1062}%
\special{fp}%
\special{pa 1633 884}%
\special{pa 1633 884}%
\special{fp}%

%

\special{pn 8}%
\special{pa 3243 1062}%
\special{pa 3536 1062}%
\special{fp}%
\special{sh 1}%
\special{pa 3536 1062}%
\special{pa 3469 1042}%
\special{pa 3483 1062}%
\special{pa 3469 1082}%
\special{pa 3536 1062}%
\special{fp}%
\special{pa 3829 884}%
\special{pa 3829 884}%
\special{fp}%

\end{picture}}
\caption{Generators of Thompson's groups}
\label{generators_of_T}
\end{center}
\end{figure}
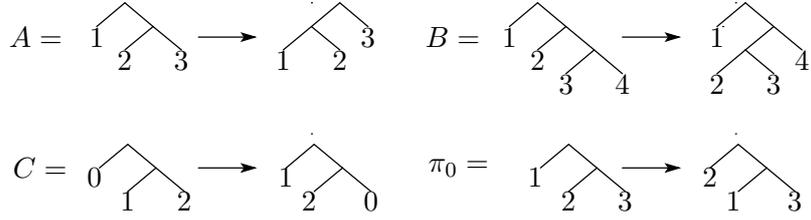

\begin{remark}\label{facts}(cf. \cite{CFP})
The commutator subgroup $[F,F]$ of $F$ is simple, and is equal to the subgroup consisting of elements which restrict to the identity on some neighborhoods of $0$ and $1$. 

Let $G$ be $F$ or $T$.
For every standard dyadic interval $I$ properly contained in $[0,1]$, let $G^I=\{g\in G\mid \supp(g)\subset I\}<G$.
Considering the canonical identification of $I$ with $[0,1]$, 
we may observe that every $G^I$ is isomorphic to $F$.
Its commutator subgroup $[G^I, G^I]$ coincides with the subgroup consisting of elements which restrict to the identity on some neighborhoods of two boundary points of $I$. 
Similarly, we may observe that $V^I$ is isomorphic to $V$, which is simple. 
\end{remark}

\begin{lemma}\label{EP} 

\begin{itemize}
\item[$(1)$] Let $G$ be either $T$ or $V$.
Let $x$ be a dyadic rational in $S^1=[0,1]/\{0=1\}$.
For every nonempty open subset $U_1$ of $S^1$
and every compact subset $U_2$ of $S^1-\{x\}$,
there is $g\in G$ such that $g(U_2)$ is contained in $U_1$.

\item[$(2)$] Let $G$ be either $F$, $T$ or $V$.
Let $I$ be a a standard dyadic interval in $[0,1]$.
For every nonempty open subset $U_1$ of $I$
and every compact subset $U_2$ in the interior of $I$,
there is $g$ in the commutator subgroup of $H^I=\{h\in G\mid \supp(h)\subset I\}$ such that $g(U_2)$ is contained in $U_1$.
\end{itemize}
\end{lemma}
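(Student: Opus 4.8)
The plan is to prove both statements by an explicit "shrinking and sliding" argument using the transitivity properties of Thompson's groups on standard dyadic intervals. The key underlying fact is that for each of $F$, $T$, $V$, the group acts transitively on the set of standard dyadic intervals of a fixed (small enough) size, and more flexibly, given any two standard dyadic intervals $J$ and $J'$ there is an element of the group carrying $J$ affinely onto $J'$; for $F$ one must also keep track of the complementary intervals, while for $T$ and $V$ the cyclic (resp. arbitrary) reordering of the pieces gives extra freedom. I would set this up once as a preliminary observation and then feed it into both parts.

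\emph{Part (2).} First I would choose a standard dyadic subinterval $J \subset I$ small enough that $J \subset U_1$; since $U_1$ is a nonempty open subset of $I$, such a $J$ exists. Next, since $U_2$ is a compact subset of the interior of $I$, I would cover it by finitely many standard dyadic intervals all properly contained in the interior of $I$, and by passing to a common refinement and taking the union of a chain of adjacent pieces, enclose $U_2$ in a single standard dyadic interval $K$ with $\overline{K} \subset \mathrm{int}(I)$. So it suffices to find $g$ in the commutator subgroup of $H^I$ with $g(K) \subset J$. Identifying $I$ with $[0,1]$ via the canonical affine identification, $H^I$ becomes a copy of $F$ (resp.\ $T$, $V$) as recorded in Remark~\ref{facts}, and $K, J$ become standard dyadic intervals with $K$ bounded away from $0$ and $1$. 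Now I build a tree pair realizing an element that collapses $K$ onto $J$: subdivide the domain so that $K$ is one of the leaf intervals (or a union of leaf intervals that I first merge), subdivide the range so that $J$ is one leaf, and match the leaf over $K$ to the leaf over $J$ while matching the remaining leaves arbitrarily (for $F$, monotonically). Because $K$ is contained in the interior, I can arrange the domain and range subdivisions to agree on neighborhoods of $0$ and $1$, so the resulting element restricts to the identity near the two endpoints; by Remark~\ref{facts} this element lies in $[H^I, H^I]$ (for $V^I \cong V$, which is simple, this is automatic). This is the element $g$ we want.

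\emph{Part (1).} The argument is parallel but on the circle. Since $x$ is a dyadic rational and $U_2$ is a compact subset of $S^1 \setminus \{x\}$, I first enclose $U_2$ in a standard dyadic interval $K \subset S^1 \setminus \{x\}$, and I choose a standard dyadic interval $J \subset U_1$. It then suffices to find $g \in G$ (with $G = T$ or $V$) carrying $K$ into $J$. Cutting the circle at $x$ presents $S^1$ as $[0,1]/(0 \sim 1)$ with $K$ a standard dyadic interval in the open interval $(0,1)$; now I produce a tree pair for an element of $T$ (resp.\ $V$) by subdividing so that $K$ is a leaf interval in the domain and $J$ is a leaf interval in the range, and matching the leaf over $K$ to the leaf over $J$ — using the cyclic freedom in $T$, resp.\ the full symmetric-group freedom in $V$, to realize such a matching. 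Here, unlike in Part (2), I do not need the element to be trivial near any point, so no commutator condition is imposed and the construction is unobstructed.

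\emph{Main obstacle.} The only genuinely delicate point is the bookkeeping that guarantees, in Part (2), that the constructed tree-pair element can be taken to be \emph{the identity on neighborhoods of both endpoints} of $I$, so that it lies in the commutator subgroup via Remark~\ref{facts}; this requires choosing $K$ (hence the covering of $U_2$) strictly inside $\mathrm{int}(I)$ and then making sure the domain and range binary subdivisions literally coincide on fixed dyadic sub-neighborhoods of $0$ and $1$ before one does any matching or reordering of the interior leaves. Everything else is a routine construction of tree pairs, together with the elementary fact that standard dyadic intervals inside any open set can be made arbitrarily small and hence fit inside $U_1$ and contain $U_2$.
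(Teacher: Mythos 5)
Your overall strategy (shrink/slide via tree pairs, and get membership in the commutator subgroup by forcing the element to be the identity near the endpoints of $I$, using Remark~\ref{facts} and the simplicity of $V$) is the same as the paper's, but one concrete step in your reduction is false: a compact subset of the interior of $I$ cannot in general be enclosed in a \emph{single} standard dyadic interval $K$ with $\overline{K}\subset\mathrm{int}(I)$. For example, with $I=[0,1]$ and $U_2=[1/4,3/4]$ (or even $U_2=\{3/8,5/8\}$), any standard dyadic interval containing $U_2$ must contain $1/2$ in its interior, and the only such one is $[0,1]$ itself. The parenthetical fallback of ``merging'' adjacent leaf intervals does not rescue this: a union of adjacent standard dyadic intervals (such as $[1/4,1/2]\cup[1/2,3/4]$) is generally not standard dyadic, so it cannot occur as a single leaf of any tree pair. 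The same problem appears in your Part (1): if $x=1/2$ and $U_2$ is an arc around the point $0=1$, no standard dyadic interval of $S^1=[0,1]/\{0=1\}$ contains $U_2$ and avoids $x$, since standard dyadic intervals do not wrap around the basepoint.

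The gap is repairable within your own framework, and the repair is exactly what the paper does: do not insist on a single enclosing leaf. Cover $U_2$ by \emph{finitely many} standard dyadic intervals contained in $\mathrm{int}(I)$ (resp.\ avoiding a neighborhood of $x$), refine the range subdivision so that $J\subset U_1$ contains at least that many small leaves, and match each covering leaf to a leaf inside $J$ (in order for $F$, cyclically for $T$, arbitrarily for $V$), keeping identical subdivisions near the endpoints of $I$ so the element is trivial there; then $g(U_2)\subset J\subset U_1$ even though $g(U_2)$ is not a single leaf image. (For Part (2) the paper takes an even shorter route: it chooses a closed interval $J'\subset\mathrm{int}(I)$ containing $U_2$ and produces $g$ supported in $J'$, so that triviality near $\partial I$ is automatic; note also that $J'$, or your target $J$, must be chosen to meet $U_1$ away from $\partial I$, which is possible since $U_1$ is open and nonempty in $I$.) With that adjustment your argument goes through; as written, the single-interval enclosure and the merging step would fail.
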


\begin{proof}
$(1)$ Since $V$ contains $T$ as its subset, it is enough to show $(1)$ for $G=T$.
$U_2$ is included in the union of finitely many standard dyadic intervals. 
It is immediate from definitions that there is $g\in T$ which maps these standard dyadic intervals into a standard dyadic interval included in $U_1$.

$(2)$ There is a closed interval $J$ in the interior of $I$, containing $U_2$.
It is immediate from definitions that there is $g\in H^{J}=\{h\in G\mid \supp(h)\subset J\}$, satisfying required conditions.
When $G=F$ or $T$ , the commutator subgroup $[H^I, H^I]$ coincides with the subgroup consisting of elements which restrict to the identity on some neighborhoods of two boundary points of $I$, according to Remark~\ref{facts}.
When $G=V$, $[H^I, H^I]=H^I$.
In both cases, $H^{J}\subset [H^I, H^I]$ and thus $g$ is included in $[H^I, H^I]$.
\end{proof}

For $\varepsilon>0$, we say that a subset $U$ of $[0,1]$ is {\it of size less than }$\varepsilon$
if $U$ is in the union of finitely many standard dyadic intervals whose sum of lengths is less than $\varepsilon$. 

\begin{lemma}[cf.\ \cite{Brin}]\label{size}
For every $\varepsilon>0$,
$V$ and $T$ are generated by elements with supports of size less than $\varepsilon$.
\end{lemma}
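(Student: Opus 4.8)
The plan is to prove the statement for $V$ and $T$ by exhibiting, for each given $\varepsilon > 0$, a finite generating set all of whose members are supported on sets of size less than $\varepsilon$. The key idea is a conjugation (or "rescaling") trick: the standard generators $A$, $B$, $C$, $\pi_0$ each have support equal to all of $[0,1]$ (or $S^1$), but we can replace them by conjugates that live inside a small standard dyadic interval, at the cost of adding one extra ``transport'' generator that moves things around. First I would choose $N$ large enough that $1/2^N < \varepsilon$, so that every standard dyadic interval of depth $\geq N$ has size less than $\varepsilon$; indeed any union of finitely many such intervals that are pairwise at the same depth and disjoint has size less than $\varepsilon$ as long as there are fewer than $2^N \varepsilon$ of them, but more simply any element supported in a single depth-$N$ interval qualifies.

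The main construction goes as follows. Fix the standard dyadic interval $I_0 = [0, 1/2^N]$. By Remark~\ref{facts}, the subgroup $G^{I_0} = \{g \in G \mid \supp(g) \subset I_0\}$ is isomorphic to $V$ (resp.\ to $F$ inside $T$, but we should instead use the copy of $V$, or argue with $T$ directly; see the obstacle paragraph), via the canonical identification of $I_0$ with $[0,1]$. Let $a, b, c, p$ be the elements of $G^{I_0}$ corresponding to the standard generators $A, B, C, \pi_0$ of $V$ under this identification; each of these is supported in $I_0$, hence has support of size less than $\varepsilon$. Next I would pick one further element $t \in G$ that cyclically permutes the depth-$N$ standard dyadic intervals $I_0, I_1, \ldots, I_{2^N-1}$ (for $G = T$ one can take the appropriate rotation $C$-type element; for $G = V$ the analogous permutation is available), and such that $t$ itself is a product of elements with small support --- or, failing that, simply include $t$ in the generating set and handle it separately, as Brin does, by noting that conjugating the small generators by powers of $t$ already produces copies of $V$ supported in each $I_j$, and these copies together with a few ``adjacent-interval'' generators (each supported in a union of two consecutive depth-$(N-1)$ intervals, still of size $<\varepsilon$ after increasing $N$ by one) generate all of $G$. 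Then I would verify that the set $\{a, b, c, p\} \cup \{t^j a t^{-j}, \ldots\} \cup \{\text{adjacency generators}\}$ generates $V$ (resp.\ $T$): any element of $V$ can be built from its action on a sufficiently fine dyadic subdivision, and finitely many ``local'' moves within depth-$N$ blocks together with moves between adjacent blocks suffice to realize any bijection of the leaves of such a subdivision, by a standard bubble-sort style argument.

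The step I expect to be the main obstacle is producing the ``transport'' or ``adjacency'' elements with small support: a single element that genuinely permutes far-apart dyadic intervals has large support, so one must instead express the needed global moves as products of elements each of which only rearranges a union of two adjacent standard dyadic intervals (which has size $< \varepsilon$ once the depth is taken one level finer). Making this precise requires a careful induction on the depth of the dyadic subdivisions involved, showing that any tree-pair element of $V$ (or $T$) factors through a sequence of such adjacent swaps --- essentially a statement that the relevant permutation groups are generated by transpositions of neighbors, combined with the local $V$-copies to handle the subtree rearrangements and the right-continuous bijection bookkeeping. This is exactly the content attributed to \cite{Brin}, so I would invoke or adapt that argument rather than reprove it from scratch, and then simply observe that all the generators thereby obtained have support contained in a union of at most two standard dyadic intervals of depth $N+1$, hence of size less than $\varepsilon$ by the choice of $N$. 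The case of $T$ needs the extra remark that the endpoint identification $0 = 1$ does not obstruct the factorization, since one may always arrange the subdivision so that no generator in the list has support meeting a neighborhood of the identified point except the rotation-type ones, which are then themselves decomposed into adjacent swaps.
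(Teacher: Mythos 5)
Your treatment of $V$ is essentially the paper's: the paper simply cites \cite{Brin} for $V$, and your sketch of small local copies plus adjacent-interval swaps is a reasonable paraphrase of that argument. The genuine gap is the case of $T$, which is exactly the part the paper proves by hand. The ``transport''/``adjacency'' elements your construction relies on do not exist in $T$: a transposition of two adjacent standard dyadic intervals (translating each onto the other and fixing the rest) is discontinuous at the endpoints of its support, so it lies in $V\setminus T$ unless the two intervals cover all of $S^1$, in which case it is a rotation whose support is the whole circle and hence has size $\geq \varepsilon$. Your proposed fixes do not close this: including the rotation-type element $t$ in the generating set violates the conclusion of the lemma, and your final remark that the rotation-type elements ``are then themselves decomposed into adjacent swaps'' produces a factorization in $V$, not in $T$. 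Writing a fixed-point-free element such as $C$ as a product of small-support elements \emph{of $T$} is precisely the nontrivial content of the lemma for $T$, and your proposal never supplies it. (A smaller unresolved point of the same kind: for $G=T$ the subgroup supported in $I_0$ is a copy of $F$, not of $T$ or $V$, as you note parenthetically but do not repair.)

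The paper's mechanism for $T$ is different and is what makes the argument go through: it uses \emph{overlapping} interval subgroups. Concretely, $F$ is generated by the three copies $F_1,F_2,F_3$ of $F$ supported on $[0,1/2]$, $[1/4,3/4]$, $[1/2,1]$, and $T$ is generated by $T_1,\dots,T_4$ supported on these intervals together with $[3/4,1/4]\subset S^1$ (checked by explicit tree-pair decompositions of $A$, $B$, $C$, as in Figures~\ref{gF} and \ref{gT}); iterating this subdivision inside each copy of $F$ shrinks the supports geometrically below $\varepsilon$. The overlap of the supporting intervals is essential, since elements of $T$ supported in pairwise disjoint proper intervals all have fixed points in common and can never generate a rotation; any correct proof for $T$ must use some such overlapping (or otherwise circle-specific) decomposition rather than disjoint blocks plus swaps. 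I recommend either adopting the paper's nested overlapping-copies argument for $T$, or proving directly that $C$ (and its rescaled analogues) factors through elements of $T$ supported in intervals of length at most $1/2$ and then inducting.
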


\begin{proof}
The proof for $V$ is in \cite{Brin}.
We show the lemma for $T$. 

We take three affine copies of $[0,1]$: $U_1=[0,1/2]$, $U_2=[1/4, 3/4]$ and $U_3=[1/2, 1]$.
For every $i=1,2,3$, $F_i=\{f\in F\mid \supp(f)\subset U_i\}$ is isomorphic to $F$.
$F$ is generated by $F_1$, $F_2$ and $F_3$ (cf.\ Figure~\ref{generators_of_T} and Figure~\ref{gF}). 

We let $U_4=[3/4,1]\cup [1,1/4]$. $U_4$ is an affine copy of $[0,1]$ in $S^1=[0,1]/\{0=1\}$.
Let $T_i=\{t\in T\mid \supp(t)\subset U_i\}$ for $1\leq i \leq 4$, all isomorphic to $F$.
$T_i=F_i$ for $i=1$, $2$, $3$.
$T$ is generated by $T_1$, $T_2$, $T_3$ and $T_4$ (cf.\ Figure~\ref{generators_of_T}, Figure~\ref{gF} and Figure~\ref{gT}).

By the above observations, we may represent every element of $T$ as a composition of elements in $\bigcup_{i=1,2,3,4}T_i$.
We may represent every element of $T_i$ as a composition of elements in the copy of $\bigcup_{i=1,2,3}F_i$ in $T_i\cong F$.
Repeating this process, we may represent $g$ as a composition of elements with size less than $\varepsilon$.
\end{proof}

\begin{figure}
\begin{center}
{\unitlength 0.1in%
\begin{picture}(38.3700,10.0400)(2.7000,-12.0400)%
%

\special{pn 8}%
\special{pa 3787 200}%
\special{pa 3655 514}%
\special{fp}%
\special{pa 3787 200}%
\special{pa 3919 514}%
\special{fp}%
\special{pa 3919 514}%
\special{pa 3853 829}%
\special{fp}%
\special{pa 3919 514}%
\special{pa 3985 829}%
\special{fp}%
\special{pa 3721 829}%
\special{pa 3655 514}%
\special{fp}%
\special{pa 3655 514}%
\special{pa 3590 829}%
\special{fp}%
\special{pa 3590 829}%
\special{pa 3524 1143}%
\special{fp}%
\special{pa 3590 829}%
\special{pa 3655 1143}%
\special{fp}%

%

\special{pn 8}%
\special{pa 2667 200}%
\special{pa 2535 514}%
\special{fp}%
\special{pa 2667 200}%
\special{pa 2799 514}%
\special{fp}%

%

\special{pn 8}%
\special{pa 2799 514}%
\special{pa 2733 829}%
\special{fp}%
\special{pa 2799 514}%
\special{pa 2865 829}%
\special{fp}%
\special{pa 2535 514}%
\special{pa 2469 829}%
\special{fp}%
\special{pa 2535 514}%
\special{pa 2601 829}%
\special{fp}%
\special{pa 2601 829}%
\special{pa 2535 1143}%
\special{fp}%
\special{pa 2601 829}%
\special{pa 2667 1143}%
\special{fp}%

%

\special{pn 8}%
\special{pa 1612 200}%
\special{pa 1744 514}%
\special{fp}%
\special{pa 1612 200}%
\special{pa 1481 514}%
\special{fp}%

%

\special{pn 8}%
\special{pa 1481 514}%
\special{pa 1547 829}%
\special{fp}%
\special{pa 1481 514}%
\special{pa 1415 829}%
\special{fp}%
\special{pa 1744 514}%
\special{pa 1810 829}%
\special{fp}%
\special{pa 1744 514}%
\special{pa 1678 829}%
\special{fp}%
\special{pa 1678 829}%
\special{pa 1744 1143}%
\special{fp}%
\special{pa 1678 829}%
\special{pa 1612 1143}%
\special{fp}%

%

\special{pn 8}%
\special{pa 558 200}%
\special{pa 426 514}%
\special{fp}%
\special{pa 558 200}%
\special{pa 690 514}%
\special{fp}%
\special{pa 690 514}%
\special{pa 624 829}%
\special{fp}%
\special{pa 690 514}%
\special{pa 756 829}%
\special{fp}%
\special{pa 756 829}%
\special{pa 690 1143}%
\special{fp}%
\special{pa 756 829}%
\special{pa 822 1143}%
\special{fp}%

%

\special{pn 8}%
\special{pa 426 514}%
\special{pa 360 829}%
\special{fp}%
\special{pa 426 514}%
\special{pa 492 829}%
\special{fp}%

%

\special{pn 8}%
\special{pa 947 672}%
\special{pa 1210 672}%
\special{fp}%
\special{sh 1}%
\special{pa 1210 672}%
\special{pa 1143 652}%
\special{pa 1157 672}%
\special{pa 1143 692}%
\special{pa 1210 672}%
\special{fp}%
\special{pa 1098 648}%
\special{pa 1098 648}%
\special{fp}%

%

\special{pn 8}%
\special{pa 2001 672}%
\special{pa 2265 672}%
\special{fp}%
\special{sh 1}%
\special{pa 2265 672}%
\special{pa 2198 652}%
\special{pa 2212 672}%
\special{pa 2198 692}%
\special{pa 2265 672}%
\special{fp}%
\special{pa 2153 648}%
\special{pa 2153 648}%
\special{fp}%

%

\special{pn 8}%
\special{pa 3056 672}%
\special{pa 3319 672}%
\special{fp}%
\special{sh 1}%
\special{pa 3319 672}%
\special{pa 3252 652}%
\special{pa 3266 672}%
\special{pa 3252 692}%
\special{pa 3319 672}%
\special{fp}%
\special{pa 3207 648}%
\special{pa 3207 648}%
\special{fp}%

\put(34.8400,-12.6900){\makebox(0,0){$0$}}
\put(36.5500,-12.6900){\makebox(0,0){$1$}}
\put(37.1500,-9.4700){\makebox(0,0){$2$}}
\put(38.6600,-9.4700){\makebox(0,0){$3$}}
\put(40.3100,-9.4700){\makebox(0,0){$4$}}
\put(24.6900,-9.4700){\makebox(0,0){$0$}}
\put(25.2200,-12.6100){\makebox(0,0){$1$}}
\put(27.0000,-12.6100){\makebox(0,0){$2$}}
\put(27.2600,-9.4700){\makebox(0,0){$3$}}
\put(28.7800,-9.4700){\makebox(0,0){$4$}}
\put(14.1500,-9.4700){\makebox(0,0){$0$}}
\put(15.6000,-9.4700){\makebox(0,0){$1$}}
\put(16.1200,-12.6100){\makebox(0,0){$2$}}
\put(17.7100,-12.6100){\makebox(0,0){$3$}}
\put(18.1000,-9.4700){\makebox(0,0){$4$}}
\put(3.6000,-9.4700){\makebox(0,0){$0$}}
\put(4.9200,-9.4700){\makebox(0,0){$1$}}
\put(6.2400,-9.4700){\makebox(0,0){$2$}}
\put(6.9000,-12.6100){\makebox(0,0){$3$}}
\put(8.3500,-12.6100){\makebox(0,0){$4$}}
\put(10.7900,-5.1400){\makebox(0,0){$F_3$}}
\put(21.3300,-5.1400){\makebox(0,0){$F_2$}}
\put(31.8800,-5.1400){\makebox(0,0){$F_1$}}
\put(42.4200,-6.7200){\makebox(0,0){$=A$}}
\end{picture}}
\caption{$A\in \langle F_1, F_2, F_3\rangle$}
\label{gF}
\end{center}
\end{figure}
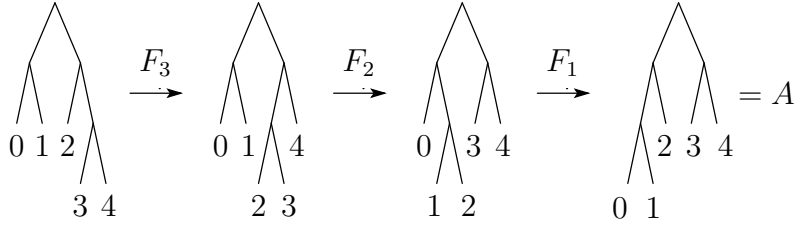

\begin{figure}
\begin{center}
\input{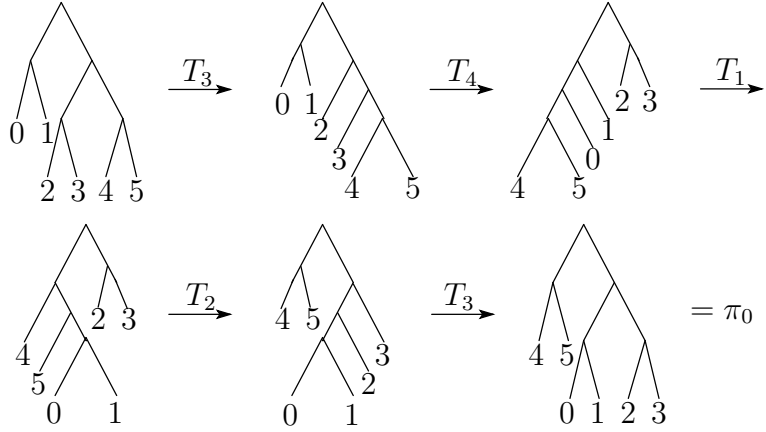}
\caption{$C\in \langle T_1, T_2, T_3, T_4\rangle$}
\label{gT}
\end{center}
\end{figure}

\begin{proof}[Proof of Corollary~\ref{thompson}]
Let $G$ be either $T$ or $V$.
Let $k\in \mathbb{N}$. 
We assume that $G$ is acting semi-simply on a $k$-dimensional complete CAT(0) space. 
According to Lemma~\ref{size}, we take a finite generating set $S$ of $G$, consisting of elements with supports of size less than $1/(k+1)$.

We fix a sequence of standard dyadic intervals $\{I_i\}_{0\leq i\leq k+1}$, where $I_{k+1}=[0,1]$ and $I_i\subset \mathrm{Int}(I_{i+1})$ for every $0\leq i\leq k$.
For every $s\in S$, there is a dyadic rational $x\in [0,1]-\supp(s)$. 
We apply Lemma~\ref{EP} $(1)$ to the interior of $I_0$ and a proper compact subset in $[0,1]-\{x\}$ including $\supp(s)$,
and take $g_0\in G$ such that $\supp(s^{g_0})=g_0(\supp(s))\subset \mathrm{Int}(I_0)$. 

For every $1\leq i\leq k$, we let $H_i$ be the commutator subgroup of $H^{I_i}=\{h\in G\mid \supp(h)\subset I_i\}$,
which is isomorphic to either $[F,F]$ or $V$.
Since both $[F,F]$ and $V$ are simple, $\{H_i\}_{1\leq i\leq k}$ satisfies the condition $(i)$ in Theorem~\ref{MainNew}.

For every $0\leq i\leq k$, there is a nonempty open subset $\bar{I_i}$ in $I_{i+1}$ such that $I_i\cap \bar{I_i}=\emptyset$.
For every $1\leq i\leq k$, we apply Lemma~\ref{EP} $(2)$ to $I=I_{i+1}$, $U_1=\bar{I_i}$ and $U_2=I_i$,
and we get $g_{i}\in H_{i+1}$ such that $g_i(I_i)\subset \bar{I_i}$.
$\{H_i\}_{1\leq i\leq k}$ and $\{g_i\}_{1\leq i\leq k}$ satisfies the condition $(ii)$ in Theorem~\ref{MainNew}.
Applying Theorem~\ref{MainNew} to a conjugate, every $s\in S$ is elliptic.

We fix mutually disjoint nonempty open subsets $J_1, \ldots, J_{k+1}$ of $[0,1]$.
For every subset $S_{k+1}\subset S$ of $(k+1)$ elements, there is a dyadic rational which is not included in $\supp(S_k)$. 
By Lemma~\ref{EP} $(1)$, there are $f_1,\ldots,f_{k+1}\in G$ such that $f_i(\supp(S_{k+1}))\subset J_i$. 
By applying Theorem~\ref{Bridson} to $S_{k+1}$, we see elements in $S_{k+1}$ have a common fixed point.
By applying Theorem~\ref{Bridson} again to $S$, we see elements in $S$ have a common fixed point, and thus there is a global fixed point for the action of $G$ on $X$.
\end{proof}

We note that various generalizations of $V$ have property $\mathrm{F}\mathcal{A}_{k}$. 
For each $q\in \mathbb{N}$ where $q\geq 2$, 
we consider standard $q$-adic intervals, instead of standard dyadic intervals, 
and define {\it Higman-Thompson groups} $V_{q}$ (\cite{Higman}).
By definition, $V_{2}$ coincides with $V$.
For each $q,n\in \mathbb{N}$ where $q\geq 2$, 
we consider products of $n$ standard $q$-adic intervals contained in $[0,1]^n$, instead of standard dyadic intervals in $[0,1]$, 
and define {\it Brin-Higman-Thompson groups} $nV_{q}$  (\cite{Brin}). 
By definition, $1V_{q}$ coincides with $V_q$.

There is a version of Lemma~\ref{size} for $nV_q$ (Proposition 3.2 of \cite{Brin}), and we can take a generating set of $nV_{q}$, consisting of elements with supports of arbitrary small size.
Since $nV_{q}$ is known to be finitely generated, we can assume that such generating set is finite.
Let $I$ be the product of $n$ standard $q$-adic intervals.
Lemma~\ref{EP} holds true for $G=nV_{q}$ and $I$.
If we define $(nV_{q})^I$ similarly as in Remark~\ref{facts},
we may observe that $(nV_{q})^I$ is isomorphic to $nV_q$, whose commutator subgroup is simple.
These observations enables us to carry on the same arguments as in the proof of Corollary~\ref{thompson}, and we get the following.

\begin{corollary}\label{genV}
For every $q,n\in \mathbb{N}$ $(q\geq 2)$, $nV_{q}$ has property $\mathrm{F}\mathcal{A}_{k}$ for every $k\in \mathbb{N}$.
\end{corollary}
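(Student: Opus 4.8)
The plan is to run the proof of Corollary~\ref{thompson} essentially verbatim, replacing each ingredient about $T$ and $V$ by the counterpart for $nV_q$ recalled in the discussion preceding the statement. Fix $q\geq 2$, $n\geq 1$ and $k\in\mathbb{N}$, and suppose $nV_q$ acts semi-simply on a complete CAT(0) space $X$ with $\dim(X)=k$. By the analogue of Lemma~\ref{size} for $nV_q$ (Proposition 3.2 of \cite{Brin}) together with the finite generation of $nV_q$, fix a finite generating set $S$ of $nV_q$ all of whose elements have supports of size less than $1/(k+1)$, where size of a subset of $[0,1)^n$ is measured by finite coverings by products of $n$ standard $q$-adic intervals.

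Next I would assemble the nested data required by Theorem~\ref{MainNew}. Fix products of $n$ standard $q$-adic intervals $I_0\subset I_1\subset\cdots\subset I_{k+1}=[0,1]^n$ with $I_i\subset\mathrm{Int}(I_{i+1})$ for $0\leq i\leq k$. For each $s\in S$ choose a $q$-adic rational point $x\notin\supp(s)$ (possible since $\supp(s)$ has size $<1$) and apply the $nV_q$-analogue of Lemma~\ref{EP}$(1)$ to push the support of $s$ into $\mathrm{Int}(I_0)$, obtaining $g_0\in nV_q$ with $\supp(s^{g_0})\subset\mathrm{Int}(I_0)$. Put $H_0=\langle s^{g_0}\rangle$ and, for $1\leq i\leq k$, let $H_i$ be the commutator subgroup of $(nV_q)^{I_i}=\{h\in nV_q\mid\supp(h)\subset I_i\}$. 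Since $(nV_q)^{I_i}\cong nV_q$ has simple, hence perfect, commutator subgroup, condition $(i)$ of Theorem~\ref{MainNew} holds for each $H_i$; moreover, as in Remark~\ref{facts}, every element supported in a box properly contained in $\mathrm{Int}(I_{j+1})$ lies in $H_{j+1}$, which gives the chain $H_0<H_1<\cdots<H_k<H_{k+1}=nV_q$ (strictness is clear). Finally, for each $1\leq i\leq k$ pick a nonempty open box $\bar{I_i}\subset I_{i+1}$ with $\bar{I_i}\cap I_i=\emptyset$ and apply the $nV_q$-analogue of Lemma~\ref{EP}$(2)$ with $I=I_{i+1}$, $U_1=\bar{I_i}$, $U_2=I_i$ to get $g_i\in H_{i+1}$ with $g_i(I_i)\subset\bar{I_i}$; since $\supp(H_i)\subset I_i$, this gives $g_i(\supp(H_i))\cap\supp(H_i)=\emptyset$, i.e.\ condition $(ii)$. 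Theorem~\ref{MainNew} applied to $s^{g_0}$ then shows $s^{g_0}$, and hence $s$, is elliptic.

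Having shown every element of $S$ is elliptic, I would finish, again as in Corollary~\ref{thompson}, by two applications of Theorem~\ref{Bridson}. Fix pairwise disjoint nonempty open boxes $J_1,\dots,J_{k+1}\subset[0,1]^n$. Given a $(k+1)$-element subset $S_{k+1}\subset S$, its support has size less than $1$, so some $q$-adic rational point avoids it and the $nV_q$-analogue of Lemma~\ref{EP}$(1)$ yields $f_1,\dots,f_{k+1}\in nV_q$ with $f_j(\supp(S_{k+1}))\subset J_j$; the conjugate sets $f_jS_{k+1}f_j^{-1}$ then have pairwise disjoint supports, hence pairwise commute, and consist of elliptic elements. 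Applying Theorem~\ref{Bridson} with $r=k+1$ and $k_1=\cdots=k_{k+1}=1$ (legitimate as $\dim(X)=k<k+1$) shows some $f_jS_{k+1}f_j^{-1}$, hence $S_{k+1}$ itself, has a common fixed point. Thus every $(k+1)$-element subset of the finite set $S$ has a common fixed point, and a second application of Theorem~\ref{Bridson} with $r=1$, $S_1=S$, $k_1=k+1$ produces a common fixed point for all of $S$, which is a global fixed point for $nV_q$ since $S$ generates.

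The only genuinely non-bookkeeping content lies in the ingredients imported from \cite{Brin}: the small-support finite generating sets for $nV_q$, the transitivity statement in the $nV_q$-analogue of Lemma~\ref{EP}, and the identification of $(nV_q)^I$ with $nV_q$ together with the simplicity of its commutator subgroup. These are classical; once they are in place, the dimension induction packaged in Theorem~\ref{MainNew} and the Helly-type Theorem~\ref{Bridson} yield the conclusion with no change from the $T$, $V$ case.
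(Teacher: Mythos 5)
Your proposal is correct and follows the paper's own route essentially verbatim: the paper likewise reduces Corollary~\ref{genV} to the argument of Corollary~\ref{thompson}, importing from \cite{Brin} the small-support generating sets (Proposition 3.2 there), the $nV_q$-analogue of Lemma~\ref{EP}, and the identification $(nV_q)^I\cong nV_q$ with simple commutator subgroup, and then applies Theorem~\ref{MainNew} and Theorem~\ref{Bridson} exactly as you do. The level of detail you supply (including the verification of conditions $(i)$ and $(ii)$ and the two applications of Theorem~\ref{Bridson}) matches or exceeds what the paper records.
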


Another generalization of $V$ is {\it Nekrashevych-R\"over groups} $V_q(G)$ (\cite{FH}, \cite{H}, \cite{N}, \cite{R}).
We fix $q\in \mathbb{N}$ ($q\geq 2$) and a subgroup $G$ of the symmetric group $\mathfrak{S}_q$.
Let $A_q$ be a set of alphabets $A_q=\{0,1,\ldots, q-1\}$.
We consider an action of $G$ on $C_q=A_q^{\mathbb{N}}$, which is induced by the action of $G$ on $A_q$.
Let $C_q$ be the set of all infinite words in $A_q$.
For a finite word $w$ in $A_q$, let $I(w)$ be a subset of $C_q$, consisting of all words which start with $w$.
A {\it division of $C_q$} is a sequence of finite words $(w_i)_{1\leq i\leq m}$ such that $\{I(w_i)\}_{1\leq i\leq m}$ are mutually disjoint and cover $C_q$.
Given a pair of divisions $W=(w_i)_{1\leq i\leq m}$, $W'=(w'_i)_{1\leq i\leq m}$ with the same cardinality 
and a sequence $(\sigma_i)_{1\leq i\leq m}$ of elements of $G$,
we define a bijection $v=v(W,W',(\sigma_i)_i)$ between $C_q$ by $v(w_iu)=w'_i\sigma_i(u)$, for every $i$ and every suffix $u$.
$V_q(G)$ is a group consisting of all bijections of the form $v(W,W',(\sigma_i)_i)$.

It is known that $V_q(G)$ is finitely generated and virtually simple (\cite{FH}).
By identifying $C_q$ with the unit interval and identifying subsets of the form $I(w)$ with standard $q$-adic intervals,
we may observe that $V_q(\{1\})$ coincides with $V_q$. 
Under this identification, we may carry on the same arguments as in the proof of Corollary~\ref{thompson}, and we get the following.

\begin{corollary}\label{Nekrashevych}
For every $q\in \mathbb{N}$ and every $G<\mathfrak{S}_n$, Nekrashevych-R\"over groups $V_q(G)$ has property $\mathrm{F}\mathcal{A}_{k}$ for every $k\in \mathbb{N}$.
\end{corollary}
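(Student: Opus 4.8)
The plan is to mimic the proof of Corollary~\ref{thompson} almost verbatim, exploiting the self-similar structure of $V_q(G)$ together with the two facts from \cite{FH} that $V_q(G)$ is finitely generated and virtually simple. First I would identify $C_q$ with the unit interval and each cylinder $I(w)$ with the corresponding standard $q$-adic interval, so that $V_q(\{1\})=V_q$ and, for every finite word $w$, the local subgroup $(V_q(G))^{I(w)}=\{h\in V_q(G)\mid \supp(h)\subseteq I(w)\}$ is again isomorphic to $V_q(G)$ under stripping the prefix $w$. Under this identification the analogues of Lemma~\ref{size} (a generating set with small supports; compare Proposition~3.2 of \cite{Brin} for $V_q$) and of Lemma~\ref{EP} (rearranging $q$-adic intervals inside a prescribed one) hold. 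Having fixed $k\in\mathbb{N}$ and a semi-simple action of $V_q(G)$ on a complete $k$-dimensional CAT(0) space $X$, I would write $V_q(G)=\langle V_q(\{1\}),\,v_\sigma\ (\sigma\in G)\rangle$ with $v_\sigma=v\big((0,1,\dots,q-1),(0,1,\dots,q-1),(\sigma,1,\dots,1)\big)$, so $\supp(v_\sigma)\subseteq I(0)$, and take a finite generating set $S=S'\cup\{v_\sigma\mid\sigma\in G\}$ where $S'$ is a finite generating set of $V_q(\{1\})$ with supports of size less than $(q-1)/\{q(k+1)\}$.

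To see that each $s\in S$ is elliptic I would fix nested $q$-adic intervals $I(w_0)\subset\cdots\subset I(w_{k+1})=C_q$, with $w_i$ the word consisting of $(k+1)-i$ zeros (so $w_i=w_{i+1}0$). For $1\le i\le k$ set $H_i=(V_q(G))^{I(w_i)}\cong V_q(G)$; since $V_q(G)$ is virtually simple it has finite abelianisation, so every homomorphism $H_i\to\mathbb{R}$ is trivial, giving condition~$(i)$ of Theorem~\ref{MainNew}. For $s\in S$ the support $\supp(s)$ lies in finitely many disjoint $q$-adic intervals not covering $C_q$ (for $s=v_\sigma$ because $\supp(v_\sigma)\subseteq I(0)\ne C_q$; for $s\in S'$ because $\supp(s)$ has size $<1$), so there is $g_0\in V_q(G)$ with $g_0(\supp(s))\subseteq I(w_0)$, and one sets $H_0=\langle s^{g_0}\rangle<H_1$. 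Put $\bar w_i=w_{i+1}1$, so $I(\bar w_i)\subseteq I(w_{i+1})$ is disjoint from $I(w_i)$, and choose $g_i\in H_{i+1}$ carrying $I(w_i)$ into $I(\bar w_i)$ and fixing $C_q\setminus I(w_{i+1})$ pointwise; then $g_i(\supp(H_i))\subseteq I(\bar w_i)$ is disjoint from $\supp(H_i)\subseteq I(w_i)$, which is condition~$(ii)$. Theorem~\ref{MainNew} then gives that $s^{g_0}$, hence $s$, is elliptic.

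For the global fixed point I would argue as in Corollary~\ref{thompson}: any $(k+1)$-element subset $S_{k+1}\subseteq S$ has support of size strictly less than $1$, since the members of $S_{k+1}$ lying in $S'$ (at most $k+1$ of them) contribute supports of total size $<(q-1)/q$ while the members of the form $v_\sigma$ all have support inside $I(0)$, of size $\le 1/q$, and $(q-1)/q+1/q=1$. Hence $\supp(S_{k+1})$ lies in finitely many disjoint $q$-adic intervals not covering $C_q$, and one can choose $f_1,\dots,f_{k+1}\in V_q(G)$ with $f_i(\supp(S_{k+1}))$ contained in pairwise disjoint $q$-adic intervals $J_i$. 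Applying Theorem~\ref{Bridson} with $r=k+1$, $k_1=\cdots=k_r=1$ and $S_i=f_iS_{k+1}f_i^{-1}$ (these pairwise commute, and each one-element subset is elliptic) produces a common fixed point for some $S_i$, hence for $S_{k+1}$. A second application of Theorem~\ref{Bridson}, now with $r=1$, $k_1=k+1$, $S_1=S$, then shows that the finite set $S$ has a common fixed point, which is a global fixed point for $V_q(G)$.

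The main obstacle — the only genuinely new point relative to Corollary~\ref{thompson} — is the interaction with the permutation generators $v_\sigma$: unlike the elements supplied by Lemma~\ref{size}, they cannot be made to have arbitrarily small support, their support always having size $1/q$. The way around this is the observation that all the $v_\sigma$ share the single support $I(0)$, so that any $(k+1)$ of the generators still have support of total size $<1$, which is precisely what both invocations of Theorem~\ref{Bridson} require. Everything else is routine: verifying condition~$(i)$ of Theorem~\ref{MainNew} is immediate from virtual simplicity of $(V_q(G))^{I(w)}\cong V_q(G)$, and the constructions of $g_0$ and of the $g_i$ are elementary rearrangements of $q$-adic intervals, exactly as in the Thompson case.
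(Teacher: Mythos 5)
Your proposal is correct and takes essentially the same route as the paper: the paper also generates $V_q(G)$ by a small-support finite generating set of $V_q(\{1\})$ together with the elements $v_\sigma$, builds the chain $H_i=\{h\in V_q(G)\mid \supp(h)\subset I(w_i)\}$ with $w_i$ a string of zeros (condition $(i)$ of Theorem~\ref{MainNew} from virtual simplicity, condition $(ii)$ from prefix-swapping elements $g_i$), and concludes with the same two applications of Theorem~\ref{Bridson}. Your explicit bookkeeping $(q-1)/q+1/q$ for the contribution of the $v_\sigma$'s is precisely what the paper's choice of the bound $(q-1)/\{q(k+1)\}$ is designed to guarantee.
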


We note that every $V_q(G)$ acts properly on an infinite-dimensional CAT(0) cube complex (\cite{H}).


\end{document}